\documentclass[11pt]{article}
\usepackage{fullpage,url,natbib}
\usepackage{times}
\usepackage{amsthm,amsfonts,amsmath,amssymb,epsfig,color,float,graphicx,verbatim}
\usepackage{algorithm}
\usepackage{algpseudocode}
\usepackage{footmisc}
\usepackage{wrapfig}
\usepackage{subcaption}
\usepackage{bbm, bm}
\usepackage{natbib}
\usepackage{bbm, bm}
\usepackage{enumerate}

\usepackage{hyperref}
\hypersetup{
	colorlinks   = true, 
	urlcolor     = blue, 
	linkcolor    = blue, 
	citecolor   = blue 
}

\newtheorem{theorem}{Theorem}
\newtheorem*{theorem*}{Theorem}

\newtheorem*{proposition*}{Proposition}
\newtheorem{example}[theorem]{Example}
\newtheorem{lemma}[theorem]{Lemma}

\newtheorem{definition}[theorem]{Definition}
\newtheorem{remark}[theorem]{Remark}

\newcommand{\one}{\mathbbm{1}}

\newcommand{\reals}{\mathbb{R}}

\newcommand{\Ecal}{\mathcal{E}}

\newcommand{\bmeta}{{\bm{\eta}}}

\newcommand{\Proj}{\mathrm{Proj}}
\newcommand{\dist}{\mathrm{dist}}

\newcommand{\bzero}{\boldsymbol{0}}

\usepackage{xcolor}

\newcommand{\beq}{\begin{eqnarray*}}
\newcommand{\eeq}{\end{eqnarray*}}
\newcommand{\beqn}{\begin{eqnarray}}
\newcommand{\eeqn}{\end{eqnarray}}

\usepackage{ifmtarg}
\usepackage{xifthen}%
\newcommand{\ent}[1][]{%
\ifthenelse{\isempty{#1}}{%
\mathrm{H}
}{
\mathrm{H}^{(#1)}
}}

\newcommand{\loch}[1][]{%
\ifthenelse{\isempty{#1}}{%
\mathrm{h}
}{
\mathrm{h}^{(#1)}
}}

\newcommand{\NN}{\mathbb{N}}

\newcommand{\half}{\frac{1}{2}}

\newcommand{\E}{\mathbb{E}}

\newcommand{\BB}{\mathbb{B}}

\newcommand{\be}{\mathbf{e}}
\newcommand{\bx}{\mathbf{x}}

\newcommand{\bg}{\mathbf{g}}

\newcommand{\bv}{\mathbf{v}}

\newcommand{\by}{\mathbf{y}}

\newcommand{\Xcal}{\mathcal{X}}

\newcommand{\norm}[1]{\left\|#1\right\|}

\title{Gradient Descent’s Last Iterate is Often (slightly) Suboptimal}

\author{
Guy Kornowski$^{1}$
\qquad
Ohad Shamir$^{1,2}$
\vspace{3pt}
\\
$^{1}$Weizmann Institute of Science
\\
$^{2}$University of Toronto
\vspace{3pt}
\\
\texttt{\{guy.kornowski,ohad.shamir\}@weizmann.ac.il}
}

\begin{document}

\maketitle

\begin{abstract}
We consider the well-studied setting of minimizing a convex Lipschitz function using either gradient descent (GD) or its stochastic variant (SGD), and examine the last iterate convergence.
By now, it is known that standard stepsize choices lead to a last iterate convergence rate of $\log T/\sqrt{T}$ after $T$ steps. A breakthrough result of \citet{jain2019making} recovered the optimal $1/\sqrt{T}$ rate by constructing a non-standard stepsize sequence.
However, this sequence requires choosing $T$ in advance,
as opposed to common stepsize schedules which apply for any time horizon. Moreover, \citeauthor{jain2019making} conjectured that without prior knowledge of $T$, no stepsize sequence can ensure the optimal error for SGD's last iterate, a claim which so far remained unproven.
We prove this conjecture, and in fact show that even in the noiseless case of GD, it is impossible to avoid an excess poly-log factor in $T$ when considering an anytime last iterate guarantee. Our proof further suggests that such (slightly) suboptimal stopping times are unavoidably common.
\end{abstract}

\section{Introduction}

Let $f:\Xcal\to\reals$ be a convex Lipschitz function over a convex domain $\Xcal\subset\reals^d$, and consider the stochastic gradient descent (SGD) algorithm\footnote{Strictly speaking, in this work we do not assume differentiability of $f$, and consider \emph{sub}-gradient methods.} starting from $\bx_0\in\Xcal:$
\begin{equation} \label{eq: GD update rule}
    \bx_{t+1}=\Proj_{\Xcal}\left(\bx_t-\eta_t \bg_t\right)
    ~~\text{for all}~~t\in\NN~,
\end{equation}
where $\E[\bg_t]\in\partial f(\bx_t)$, and $(\eta_0,\eta_1,\dots)$ is the stepsize sequence (also referred to as learning rate).
Although SGD has been studied extensively for decades \citep{robbins1951stochastic,nemirovskiyudin1983}, and is commonly considered the main workhorse of machine learning \citep{zhang2004solving,bottou2010large,goodfellow2016deep},
there still remain some surprisingly fundamental gaps in its theoretical analysis.

The textbook analysis of SGD implies that for stepsizes $\eta_{t}=\Theta(1/\sqrt{t})$, the error of the \emph{average} iterate ($\frac{1}{T}\sum_{t=1}^{T}\bx_t$ after $T$ steps) converges at a $1/\sqrt{T}$ rate (cf. \citealp[Theorem 3.4]{hazan2016introduction}). This well-known result has the clear drawback that in practice, it is much more common to return the \emph{last} iterate obtained, namely $\bx_T$, rather than the average iterate.
However, the last iterate of SGD with the aforementioned stepsizes only converges at a suboptimal $\log (T)/\sqrt{T}$ rate \citep{shamir2013stochastic,harvey2019tight}, differing from the information-theoretically optimal rate by an excess log factor.
This unsatisfying aspect led to substantial research efforts analyzing the last iterate convergence of SGD under various settings, as well as motivated the design of modified output rules that recover the optimal rate. We further discuss prior works along these lines in Section~\ref{sec: related}.

Ultimately, \citet{jain2019making} proved that the last iterate of SGD can indeed converge at an optimal $1/\sqrt{T}$ rate after $T$ steps.
At the heart of this remarkable result is the design of a non-trivial stepsize sequence $(\eta_{t})_{t=0}^{T-1}$, which unfortunately still suffers from a shortcoming: its values depend on $T$, and therefore the number of steps $T$ must be chosen in advance.
In other words, after completing a predetermined budget of $T$ steps, if additional steps are required or desired, it is unclear how to continue minimizing the error at an optimal rate.
In many applications however, it is desirable to have anytime algorithms with anytime guarantees, and the anytime nature of SGD is arguably one of its appeals in the first place.
The described issue was raised already by \citet{jain2019making}, where the authors conjectured that
\begin{quote}
\textit{``in absence of a priori information about $T$, no step-size sequence can ensure the information theoretically optimal error rates for final iterate of SGD.''}
\end{quote}
The conjecture has so far remained unproven, leaving a natural open problem in the theoretical analysis of SGD, as to whether the last iterate can achieve optimal convergence in an anytime fashion.

In this work, we resolve this question and answer it negatively. In fact, we show that even in the noiseless case, the last iterate of gradient descent (GD, i.e. $\bg_t\in \partial f(\bx_t)$ deterministically) cannot possibly avoid an excess poly-log factor compared to the optimal rate, whenever the stopping time is not carefully chosen in advance.
This stronger lower bound for anytime GD proves the conjecture of \citet{jain2019making} for SGD as a special case.

The paper is structured as follows. After discussing related work, we present in Section~\ref{sec: prelim} the formal setting that we consider. In Section~\ref{sec: main result} we state our main result, and discuss some notable consequences.
In Section~\ref{sec: proof} we present the key proof ideas, with some formal proof details deferred to the appendix. We conclude in Section~\ref{sec: conclusion}.

\subsection{Related Work} \label{sec: related}

\paragraph{Last iterate of SGD.}
As previously discussed, classical analyses of SGD typically deal with the convergence of the average iterate \citep{polyak1992acceleration,nemirovski2009robust}. \citet{zhang2004solving}  analyzed the convergence rate of the last iterate of SGD with constant stepsize for  learning linear predictors. This was extended to general convex objectives and decaying stepsizes in \citet{shamir2013stochastic}. \citet{harvey2019tight} further established tight high probability bounds.
Tight constants were derived for GD in the deterministic setting by \citet{zamani2023exact}.
The general smooth setting was studied by \citet{moulines2011non}, with results later improved by \citet{taylor2019stochastic,liu2024revisiting}. Several recent works studied last iterate convergence in the so called (near-)interpolation or low-noise regime, first for least-squares \citep{varre2021last} and subsequently for general smooth losses \citep{attia2025fast,garrigos2025last}.
Several works considered objectives with an empirical risk minimization structure, and established last iterate convergence for SGD with respect to different sampling schemes \citep{gower2019sgd,liu2024last},
which was further studied through connections to continual learning \citep{evron2025continual,cai2025last}.

\paragraph{Optimal convergences rates.}
Several works considered modifications of SGD or of its output rule in order to remove excess log factors and recover optimal rates for strongly-convex objectives \citep{hazan2014beyond,rakhlin2012making,lacoste2012simpler}. The question of whether some form of averaging is needed for SGD to recover optimal rates in this context was raised by \citet{shamir2012open}, and answered affirmatively by \citet{harvey2019tight} both for strongly-convex and convex objectives. However, their results applied only for the theoretically standard choice of step sizes ($\Theta(1/\sqrt{t})$ in the convex case and $\Theta(1/t)$ in the strongly-convex case). As previously discussed, \citet{jain2019making} then showed that it is possible to recover optimal rates with SGD's last iterate by considering non-standard stepsizes that depend on the stopping time $T$.

\paragraph{Anytime smooth GD.}
In the related setting of smooth convex deterministic optimization, recent works designed non-standard stepsizes for GD that accelerate the well-established convergence rates of constant stepsizes \citep{altschuler2024acceleration,grimmer2025accelerated}. It was noted by \citet{kornowski2024open} that these results do not yield anytime improvements, and it was asked whether such improvements are possible, which was then
answered positively
by \citet{zhang2025anytime}. It is interesting to note that in the smooth deterministic setting, the best known anytime bounds differ from non-anytime bounds by polynomial factors, and it is an open problem as to whether this is necessary. 
In contrast, in this work we prove that poly-logarithmic gaps are necessary in the non-smooth setting.

\section{Preliminaries} \label{sec: prelim}

\paragraph{Notation.}
We denote $\NN:=\{1,2,\dots\}$ and $[n]:=\{1,\dots,n\}$.
We use bold-faced font to denote vectors, e.g. $\bx\in\reals^d$, and denote by $\|\cdot\|$ the Euclidean norm.
We denote $\dist(\bx,A):=\inf_{\mathbf{a}\in A}\|\bx-\mathbf{a}\|$ for $A\subset\reals^d,~\bx\in\reals^d$,
and by $\Proj_{A}$ the projection operator onto $A$.
$\partial f(\cdot)$ denotes the sub-gradient set of a convex function $f$.
A function $f:\Xcal\to\reals$ is called $G$-Lipschitz if for any $\bx,\by\in\Xcal:|f(\bx)-f(\by)|\leq G\norm{\bx-\by}$.
We use the standard big-O and little-o notation, with $O(\cdot),~\Omega(\cdot)$ and $\Theta(\cdot)$ hiding absolute constants that do not depend on problem parameters, and $a_t=o(b_t)$ meaning $\lim_{t\to\infty}a_t/b_t=0$.

\paragraph{Setting.}

Throughout the paper we impose the standard assumptions that $f:\Xcal\to\reals$ is convex and $G$-Lipschitz where $\Xcal\subset\reals^d$ is a closed convex domain. Given an initial point $\bx_0\in\Xcal$,
we consider the sub-gradient method (as in Eq.~(\ref{eq: GD update rule})) so that $\bg_t\in\partial f(\bx_t)$, with stepsize sequence $\bmeta=(\eta_t)_{t=0}^{\infty}$.

Given a stepsize sequence, we are interested in its worst-case anytime last iterate guarantee:

\begin{definition}
For stepsize sequence $\bmeta=(\eta_{t})_{t=0}^{\infty}$, we say that $\Ecal^{G,D}_\bmeta:\NN\to\reals_{\geq 0}$ is an anytime convergence rate guarantee for $\bmeta$, if for all $G$-Lipschitz and convex $f$ over domain $\Xcal$ with diameter at most $D$, and initialization $\bx_0\in\Xcal$,
it holds that
\[
f(\bx_t)-\min_{\bx\in\Xcal} f(\bx) \leq  \Ecal^{G,D}_{\bmeta}(t)~~\text{for all}~~t\in\NN~.
\]
\end{definition}

For example, we recall the following known anytime last-iterate convergence guarantee:

\begin{example}[\citep{shamir2013stochastic}]
\label{ex: known upper}
The stepsize sequence $\bmeta=(D/G\sqrt{t+1})_{t=0}^{\infty}$ has the anytime convergence guarantee $\Ecal_\bmeta^{G,D}(t)=\frac{DG(4+2\log t)}{\sqrt{t}}$.
\end{example}

\section{Main Result} \label{sec: main result}

We are now ready to present our main result:
\begin{theorem} \label{thm: main}
No stepsize sequence $\bmeta$ has an anytime convergence guarantee satisfying
\[
\Ecal^{G,D}_\bmeta(t)=o\left(\frac{DG\log^{1/8}(t)}{\sqrt{t}}\right)~~\text{as $t\rightarrow \infty$}~.
\]
In particular, neither SGD nor GD's last iterate can yield the optimal $1/\sqrt{t}$ rate in an anytime fashion.
\end{theorem}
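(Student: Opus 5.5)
We argue by contradiction. Normalize $G=D=1$ by scaling, and suppose $\bmeta$ admits an anytime guarantee $\Ecal_\bmeta(t)\le \epsilon_t\sqrt{\log^{1/4}(t)}/\sqrt{t}$ with $\epsilon_t\to 0$; more precisely $\Ecal_\bmeta(t)=o(\log^{1/8}(t)/\sqrt t)$. The plan has two ingredients. First, simple one-dimensional instances show that at every time $t$ the stepsizes must be \emph{locally} close to the scale $1/\sqrt{t}$. Second, a Harvey et al.\ / Shamir–Zhang style staircase construction shows that stepsizes which stay near $1/\sqrt{t}$ over a long multiplicative window of times force an excess $\log$-type error at some time in that window. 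Because the guarantee must hold simultaneously for all $t$, we cannot tune the schedule to a single horizon as \citet{jain2019making} do, and the two constraints clash.

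\textbf{Step 1 (stepsizes cannot be too small).} Take $f(x)=|x-1|$ on $[0,1]$ (or the linear function $f(x)=-x$) started at $x_0=0$. GD moves right by exactly $\eta_s$ per step until it reaches the minimizer. Hence $f(x_t)\ge 1-\sum_{s<t}\eta_s$, and the guarantee forces $\sum_{s<t}\eta_s\ge 1-o(\log^{1/8}t/\sqrt t)$ for large $t$. That alone is weak. The useful version restarts the argument in a window: for any $t$, an adversary can make the iterates sit at a point from which the remaining distance is $r$, and then the partial sums over $[t,2t]$ must cover $r$. Combined with scaling, this gives $\sum_{s\in[t,2t]}\eta_s\gtrsim \sqrt t\cdot(\text{something})$ in an averaged sense. \textbf{Step 2 (stepsizes cannot be too large).} Take $f(x)=|x|$ with $x_0=0$ on $[-1,1]$. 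Subgradient steps oscillate with amplitude comparable to $\eta_{t-1}$, giving $f(x_t)\gtrsim\eta_{t-1}$ along a choice of subgradients. So $\eta_t=o(\log^{1/8}t/\sqrt t)$. Together with Step 1, most of the stepsize mass in each dyadic block $[2^k,2^{k+1}]$ is at scale roughly $1/\sqrt t$, up to $\log^{1/8}$ factors.

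\textbf{Step 3 (main obstacle: accumulating log error).} Build a high-dimensional piecewise-linear function in the spirit of \citet{harvey2019tight}: $f(\bx)=\max_i(\langle \bv_i,\bx\rangle)$ over a suitable domain. Its subgradient at step $s$ points along a fresh coordinate $\be_s$ with a carefully designed magnitude, so that the iterate accrues a drift $\sum_s \eta_s h_s$. Let the construction act on a window of $K$ consecutive dyadic blocks, $t\in[T_0,2^K T_0]$. Each block on which the stepsize mass is at scale $\asymp 1/\sqrt t$ then contributes an additive $\Omega(1/\sqrt{t_{\text{end}}})$ to the function value at the end of the window. This yields error $\gtrsim K^{c}/\sqrt{t_{\text{end}}}$ for some power $c$. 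The difficulty is that Steps 1–2 only give control up to $\log^{1/8}$ slack, and only on average. I would use a pigeonhole over blocks: either many blocks have large steps, and then Step 2 is violated at one of them, or many have adequate mass, and then Step 3 accumulates. Choosing $K\asymp\log t$ and balancing the losses from the slack should produce the exponent $1/8$, presumably through a few square-root losses (for example $\sqrt{\sqrt{\log}}$ from the Cauchy–Schwarz steps and from converting averaged bounds to pointwise ones). This contradicts $o(\log^{1/8}t/\sqrt t)$ along a subsequence of stopping times. Since GD is SGD with zero noise, the statement for SGD follows immediately.
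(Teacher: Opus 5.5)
The gap is in Step~3: the mechanism you propose there is false, and the ideas that actually produce the logarithm are missing. Steps~1 and~2 essentially reproduce Lemma~\ref{lem: 1d bounds}. Step~2 is item~1. Step~1 becomes item~2 once you rescale the linear instance: $f(x)=\epsilon(1-x)$ on $[0,1]$ from $x_0=0$ with $\epsilon=2\phi(t)/\sqrt t$ forces $\sum_{s<t}\eta_s\ge\sqrt t/(4\phi(t))$. Your dyadic-window claim $\sum_{s\in[t,2t]}\eta_s\gtrsim\sqrt t$ is neither justified nor needed. The paper subtracts the Step~2 bound $\sum_{k<t_1}\eta_k\le 2\phi(t_1)\sqrt{t_1+1}$ from the cumulative lower bound and works on the longer window $[t_1,T/2]$ with $t_1\approx T/\phi^2$.

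At a single stopping time, earlier blocks cannot force more than $O(1/\sqrt t)$ error. Take $\eta_s\asymp 1/\sqrt{s+1}$ on $[0,t/2)$, and on $[t/2,t)$ run the schedule of \citet{jain2019making} for horizon $t/2$. Their guarantee holds from any initial point of a diameter-$D$ domain, so $f(\bx_t)-\min f=O(1/\sqrt t)$ on every instance. Yet every dyadic block of your window has its stepsize mass at scale $1/\sqrt s$, and no step is too large. So ``each adequate block contributes an additive $\Omega(1/\sqrt{t_{\mathrm{end}}})$'' fails, and your block-level pigeonhole gives no contradiction. For this schedule the forced-bad times lie inside the last block, e.g.\ around $3t/4$, where the steps have been flat at scale $1/\sqrt t$ for a long stretch. (In the Harvey-type construction, step $j$ is weighted by $1/(t+1-j)$ at time $t$, so all of $[0,t/2)$ contributes only $O(1/\sqrt t)$. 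The logarithm comes from the distance scales $t-j\in[2^m,2^{m+1})$ inside the final block.)

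What is missing is (a) a quantitative single-time bound and (b) a way to use the anytime requirement at many times at once.
\begin{itemize}
\item[(a)] Lemma~\ref{lem: high dim bound} takes $f(\bx)=\max_i\bv_i^\top\bx$ on the unit ball. Here $\bv_i=\sum_{j<i}a_j\be_{j+1}-b_i\be_{i+1}$, with $a_j=\frac{\min\{1,\eta_j\sqrt{t+1}\}}{16\phi(t+1)(t+1-j)}$ and $b_j=\min\{\frac12,\frac{1}{2\eta_j\sqrt{t+1}}\}$. The factor $\phi$ in $a_j$ is exactly what makes $a_j\sum_{k>j}\eta_k\le\frac12\eta_jb_j$ hold, via the Step~2 upper bound on step sums. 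That inequality keeps the subgradient oracle on the intended staircase. Combined with $\min\{1,\eta_j\sqrt{t+1}\}\ge\eta_j\sqrt{j+1}/\phi(j+1)$, it gives $\phi(t+1)^4\gtrsim\sum_{j<t}\frac{j\eta_j^2}{t+1-j}$. For a single $t$ this can be $O(1)$; Jain's schedule achieves that.
\item[(b)] Average this over $t\in[T]$. The kernel $\frac1{t+1-j}$ then sums to a harmonic number, giving $\phi(T+1)^4\gtrsim\frac{H_{T/2}}{T}\sum_{k\le T/2}k\eta_k^2$. Cauchy--Schwarz on $[t_1,T/2]$ gives $\sum_{k\le T/2}k\eta_k^2\gtrsim T/\phi^4$, hence $\phi^8\gtrsim\log T$.
\end{itemize}
The exponent $1/8$ is this explicit count of $\phi$ factors, not a guessed sequence of square-root losses. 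The same averaging is what shows that suboptimal stopping times have positive density. Your sketch instead places the bad time at the end of a window, which the restart example shows is exactly where a schedule can be good.
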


\begin{remark}[absence of noise]
    As previously mentioned, Theorem~\ref{thm: main} confirms the conjecture of \citet{jain2019making}, and is in fact stronger as it applies even in the noiseless case of GD. It is interesting to note that an intuition conveyed by \citet{jain2019making} is that noise in the sub-gradients can lead SGD to be ``bad in expectation'' (precisely defined therein) even in one dimension.
Our proof is based on a different perspective, where instead of noise, the key factors driving the lower bound constructions are high-dimensionality and not knowing when the algorithm should stop.
We discuss this in more detail in Section~\ref{sec: proof}.
\end{remark}

\begin{remark}[suboptimal stopping times are common]
\label{rem: common}
Note that a stepsize sequence can have a guarantee $\Ecal^{G,D}_\bmeta$ with a \textbf{sub}-sequence $(t_k)_{k=0}^{\infty}\subsetneq \NN$ satisfying $\Ecal^{G,D}_\bmeta(t_k)=O(DG/\sqrt{t_k})$. For example, applying a ``doubling trick'' by concatenating optimal time-dependent stepsize sequences (e.g., as provided by \citealt{jain2019making}) for increasing powers of $2$, yields an infinite sequence $\bmeta$ so that $\Ecal^{G,D}_\bmeta(2^k)=O(DG/\sqrt{2^{k}/2})=O(DG/\sqrt{2^{k}})$.
It is therefore interesting to ask how ``dense'' suboptimal stopping times are. Our proof actually shows that for any stepsize sequence, as $T\to\infty$, a uniformly random
stopping time $t\in[T]$ suffers from the aforementioned poly-log overhead. In other words, (slightly) suboptimal stopping times necessarily have so-called positive natural density \citep{niven1951asymptotic}, and therefore occur often, in a suitable sense.
\end{remark}

\subsection{Proof of Theorem~\ref{thm: main}} \label{sec: proof}

By a rescaling argument, it suffices to prove Theorem~\ref{thm: main} for $G=1,\,D=2$, and we abbreviate notation accordingly by denoting $\Ecal_\bmeta=\Ecal^{1,2}_\bmeta$.
Let $\bmeta$ be some stepsize sequence, and suppose it has a corresponding anytime guarantee satisfying
\begin{equation} \label{eq: def phi}
\Ecal_\bmeta(t)\leq \frac{\phi(t)}{\sqrt{t}}
\end{equation}
for some non-decreasing function $\phi:\NN\to[1,\infty)$ (e.g. $\phi(t)=c_1\log^{c_2}(t)+c_3$ for some constants $c_1,c_2,c_3$). Our goal is to show that it cannot be that $\phi(t)=o(\log^{1/8}(t))$.

To that end, we start by establishing two basic lower bounds on $\Ecal_\bmeta(t)$, and therefore on $\phi(t)$ (which is at least $\Ecal_\bmeta(t)\sqrt{t})$,
in terms of $(\eta_0,\dots,\eta_{t-1})$.

\begin{lemma} \label{lem: 1d bounds}
    For any stepsize sequence $\bmeta$ and $t\in\NN$ it holds that:
    \begin{enumerate}
        \item $\Ecal_\bmeta(t)\geq \eta_{t-1}$.
        \item If $\sum_{j=0}^{t-1}\eta_{j}\geq  \half$, then $\Ecal_\bmeta(t)\geq \frac{1}{4e^{2}\sum_{j=0}^{t-1}\eta_j}$.
    \end{enumerate}
\end{lemma}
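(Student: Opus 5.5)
For the first item I will build a one-dimensional example. The idea is to exploit the freedom in choosing sub-gradients at a kink: the iterate stays put for the first $t-1$ steps, and then a single step of size $\eta_{t-1}$ is taken away from the minimizer. Concretely, take $\Xcal=[-1,1]$ (diameter $2=D$) and $f(x)=|x-1|$, which is convex and $1$-Lipschitz with $\min_{\Xcal}f=f(1)=0$. Initialize at $x_0=1$.

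The key steps are as follows.
\begin{enumerate}
\item Since $\partial f(1)=[-1,1]\ni 0$, the valid sub-gradient choice $g_j=0$ for all $j<t-1$ gives $x_0=x_1=\dots=x_{t-1}=1$, whatever the stepsizes $\eta_0,\dots,\eta_{t-2}$ are.
\item At step $t-1$ I choose $g_{t-1}=1\in\partial f(1)$. This gives $x_t=\Proj_{[-1,1]}(1-\eta_{t-1})$.
\item If $\eta_{t-1}\le 2$, then $x_t=1-\eta_{t-1}$ and so $f(x_t)-\min_{\Xcal}f=\eta_{t-1}$. Since the definition of an anytime guarantee requires $\Ecal_\bmeta(t)$ to dominate this error for every admissible instance, we get $\Ecal_\bmeta(t)\ge\eta_{t-1}$.
\end{enumerate}

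The main obstacle is the regime $\eta_{t-1}>2$. With $G=1$ and $D=2$ every error is at most $2$, so no single instance can certify an error larger than $2$. My plan is to handle this where the lemma is applied rather than inside the construction. Under \eqref{eq: def phi} we only use item~1 to bound $\phi(t)$ from below, and the construction already gives $\phi(t)\ge\sqrt t\,\min\{\eta_{t-1},2\}$. If $\eta_{t-1}>2$ for some $t$, this alone yields $\phi(t)\ge 2\sqrt t$, which is far beyond any poly-logarithmic $\phi$. So in the relevant range we may assume $\eta_{t-1}\le 2$, where the bound holds exactly as stated. If one insists on the literal statement for all $\eta$, I would argue that the intended convention is that such huge stepsizes are excluded a priori, since they already violate the target rate.

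For the second item I will use a ``slow progress'' instance, where a far-away minimizer combined with a total movement budget forces a large error. Write $S=\sum_{j<t}\eta_j$. Take $\Xcal=[-1,1]$ and start at $x_0=-1$. The plan is to use $f(x)=\epsilon\,|x-a|$, made $1$-Lipschitz by choosing $\epsilon\le 1$. Every iterate then moves by at most $\epsilon\eta_j$, so $|x_t-x_0|\le\epsilon S$. Choosing $\epsilon\approx 1/(cS)$ makes the total travel at most a constant, while the minimizer $a$ sits at a constant distance. The error is then at least $\epsilon\cdot\Omega(1)=\Omega(1/S)$, and the condition $S\ge\half$ ensures $\epsilon\le 1$. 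The specific constant $1/(4e^2)$ suggests the authors instead use a smoothly varying slope, e.g.\ an exponential-type profile; I would first try the simple two-piece version above and tune constants afterwards.
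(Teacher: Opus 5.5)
Your proposal is correct, but both items take a different route from the paper's.

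\textbf{Item~1.} Your construction is essentially the ``somewhat hacky'' argument the paper mentions in a footnote: you return two different sub-gradients ($0$, then $1$) at the same point $x=1$. This is legitimate under the paper's definition, where $\bg_t$ may be any element of $\partial f(\bx_t)$. It would not work, however, for an oracle that must answer consistently at a given point. The appendix avoids this issue by using a piece of slope $c_\epsilon=\epsilon/\sum_{j\le t-2}\eta_j$ on $[0,\epsilon]$, started at $x_0=\epsilon$. GD then lands on the kink $0$ exactly at time $t-1$, and only there receives the sub-gradient $-1$; letting $\epsilon\to0$ gives $\eta_{t-1}$.

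Your version has its own advantage. The paper's kink sits at the centre of $[-1,1]$, so its claim $x_t=\eta_{t-1}$ tacitly requires $\eta_{t-1}\le 1$; otherwise $x_t$ is clipped to $1$. Your boundary minimizer certifies $\min\{\eta_{t-1},2\}$. You are also right that nothing more is possible, since every error is at most $G\cdot D=2$, so the item is literally false for $\eta_{t-1}>2$. One caution on your downstream remark: a single large stepsize only gives a constant lower bound on the non-decreasing $\phi$. It contradicts $\phi=o(\log^{1/8})$ only if it recurs for arbitrarily large $t$.

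\textbf{Item~2.} The paper uses the quadratic $x^2/(4S)$ with $S=\sum_{j<t}\eta_j$, started at $x_0=1$. It computes $x_t=\prod_{j<t}(1-\eta_j/(2S))$ and bounds this product below by $e^{-1}$ via $\log(1-u)\ge -2u$ for $u\le\half$, which is where the $e^{-2}$ comes from. Your travel-budget instance is more elementary and needs no tuning. Take $a=1$, $x_0=-1$ and $\epsilon=1/(2S)\le 1$. The iterates then move right by exactly $\epsilon\eta_j$ at each step, so $x_t=-\half$ and the error is $\tfrac{3}{4S}\ge\tfrac{1}{4e^2 S}$.
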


Both of the bounds above holds already in one dimension. The first follows from considering a ``v-shaped'' function with a minimum at $0$, and noting that if $x_{t-1}$ is very close to the minimum, then $|x_{t}|\approx \eta_{t-1}$,
which therefore serves a lower bound on the last-iterate error.\footnote{An easy, somewhat hacky way to prove this bound is simply by considering $f(x)= |x|$ initiated at zero, so that all sub-gradients until time $t$ are $0\in \partial f(0)$, and then  $g_t=1\in\partial f(0)$.
The proof provided in the appendix however does not rely on an inconsistent sub-gradient choice.}
Intuitively then, the lower bound on the last-iterate holds due to the algorithm not knowing when it should stop.
The second bound follows from considering a quadratic, and it shows that the stepsize sum needs to grow at a rate of roughly $\Ecal_\bmeta(t)^{-1}\geq \sqrt{t}/\phi(t)$.
The proof of Lemma~\ref{lem: 1d bounds} is deferred to the appendix.

Next, we establish a third lower bound on $\Ecal_\bmeta(t)$.

\begin{lemma} \label{lem: high dim bound}
For any stepsize sequence $\bmeta$, any $t\in\NN$ 
and $\phi$ as in Eq.~\eqref{eq: def phi}, it holds that
\begin{equation}\label{eq: E >= 1/phi}
\Ecal_\bmeta(t)\geq
\frac{1}{64\phi(t+1)\sqrt{t+1}}\sum_{j=0}^{t-1}\frac{\min\{1,\eta_j\sqrt{t+1}\}^2}{(t+1-j)}
~.
\end{equation}

\end{lemma}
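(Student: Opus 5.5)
The plan is to prove Lemma~\ref{lem: high dim bound} with a high-dimensional ``resisting'' construction in the spirit of \citet{shamir2013stochastic,harvey2019tight}. Fix $t$ and write $\alpha_j:=\min\{1,\eta_j\sqrt{t+1}\}/\sqrt{t+1}\leq \eta_j$. Work in $\reals^{t+1}$ on the unit ball (diameter $2$), start from $\bx_0=\bzero$, and take a piecewise-linear convex function
\[
f(\bx)=\max_{0\leq i\leq t} \langle \mathbf{h}_i,\bx\rangle ,
\]
with $1$-Lipschitz pieces. Each $\mathbf{h}_i$ has a positive entry on a ``fresh'' coordinate $i$ and small negative entries on the earlier coordinates $j<i$. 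The entries are tuned so that at step $j$ the active piece is exactly $\mathbf{h}_j$. Then GD moves by $-\eta_j\mathbf{h}_j$, and this step pushes the iterate into coordinate $j$ with magnitude $\eta_j$. Coordinates where $\eta_j\sqrt{t+1}>1$ are capped by scaling the fresh entry down, which is why $\min\{1,\eta_j\sqrt{t+1}\}$ appears. The key invariant, proved by induction on $j$, is that the past steps leave $\langle \mathbf{h}_{j},\bx_j\rangle$ the largest among all pieces, so the adversarial sub-gradient choice is consistent. This is the same bookkeeping as in Harvey et al.'s lower bound for the $1/\sqrt{t}$ schedule.

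Next, bound the error at the last iterate. The negative entries of $\mathbf{h}_i$ on coordinate $j$ should decay like $1/(t+1-j)$, mimicking the Harvey weights with the time index reversed. Unrolling the recursion then gives $f(\bx_t)\gtrsim \sum_j \alpha_j^2\sqrt{t+1}/(t+1-j)$ up to normalization. Meanwhile $\min f\leq f(-\mathbf{u})$ for a suitably normalized comparator $\mathbf{u}$, and $f(-\mathbf{u})$ is of order $-1/\sqrt{t+1}$ times the weight scale. The difference $f(\bx_t)-\min f$ is then the sum in Eq.~\eqref{eq: E >= 1/phi}, up to the normalization factor still to be fixed.

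The hard part is keeping the iterates inside the ball. Projections would otherwise break the clean recursion, and the total displacement $\sum_j\eta_j$ may be huge. This is where $\phi$ enters, and I plan to use the anytime guarantee itself, at time $t+1$, to pay for it. Scale the whole construction by a factor $\lambda\approx 1/(\phi(t+1)\cdot\mathrm{const})$. The adversarial coordinates then have small magnitude, and the potentially large drift of the iterates is confined to a direction along which GD is actually optimizing. If some iterate would leave the ball, or the construction would fail, then applying Lemma~\ref{lem: 1d bounds}-type reasoning and the guarantee $\Ecal_\bmeta(t+1)\leq\phi(t+1)/\sqrt{t+1}$ would give a contradiction. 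Roughly, a large accumulated norm forces a large error at time $t+1$.

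This scaling costs exactly the factor $1/\phi(t+1)$ in the lower bound. The $1/\sqrt{t+1}$ and the constant $64$ should come out of normalizing the $\mathbf{h}_i$ to be $1$-Lipschitz and the comparator to be in the ball. Making the no-projection argument rigorous, choosing $\lambda$ so that all iterates up to $t$ provably stay in the ball, is the step I expect to be the main obstacle. My first attempt will be to bound $\|\bx_j\|^2\leq\sum_{i<j}\eta_i^2\|\mathbf{h}_i\|^2$ plus cross terms, which are nonpositive by construction, and to control $\sum_i\eta_i^2$ through the guarantee.
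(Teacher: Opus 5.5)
There is a genuine gap. You treat the active-piece invariant as ``the same bookkeeping as in Harvey et al.,'' but that is exactly the step that fails for an arbitrary $\bmeta$. It is there, not at the ball constraint, that the anytime guarantee has to be spent. Write $c_j>0$ for the fresh entry of $\mathbf{h}_j$ and $-d_j$ for the entry that all later pieces put on coordinate $j$. As long as the active piece at step $m$ is $\mathbf{h}_m$, coordinate $j<m$ of $\bx_m$ equals $-\eta_jc_j+d_j\sum_{k=j+1}^{m-1}\eta_k$. Consequently $f(\bx_t)=\langle\mathbf{h}_t,\bx_t\rangle=\sum_{j<t}d_j\big(\eta_jc_j-d_j\sum_{k=j+1}^{t-1}\eta_k\big)$. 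Two things are needed here. For $\mathbf{h}_m$ to actually be the minimal-index maximizer at $\bx_m$, every earlier coordinate must keep its sign. For $f(\bx_t)$ to be large, the later drift must not eat up the fresh push. Both follow from a condition like $d_j\sum_{k=j+1}^{t}\eta_k\le\frac12\eta_jc_j$ for all $j$. For the $1/\sqrt{t}$ schedule this is automatic. For a general sequence (e.g.\ a small $\eta_j$ followed by large steps) it holds only if the $d_j$ are calibrated against an \emph{upper} bound on future step sums, which your proposal never provides.

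The missing ingredient is the first item of Lemma~\ref{lem: 1d bounds}, applied at every time $k+1\le t+1$ and not just at $t+1$. It gives $\eta_k\le\Ecal_\bmeta(k+1)\le\phi(k+1)/\sqrt{k+1}$, hence $\sum_{k=j+1}^{t}\eta_k\le2\phi(t+1)(\sqrt{t+1}-\sqrt{j+1})$. Take $d_j=\min\{1,\eta_j\sqrt{t+1}\}/(16\phi(t+1)(t+1-j))$ and $c_j=\min\{\frac12,\frac{1}{2\eta_j\sqrt{t+1}}\}$. Then the condition holds, so $f(\bx_t)\ge\frac12\sum_{j<t}c_jd_j\eta_j$, which is exactly the right-hand side of the lemma. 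No comparator is needed, since $\min f\le f(\bzero)=0$.

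Once this condition holds, the ball is a non-issue rather than the main obstacle. Coordinate $j$ of every iterate has absolute value at most $\eta_jc_j\le\frac{1}{2\sqrt{t+1}}$, so $\|\bx_m\|\le\frac12$ and the projection is never active. The remedies you propose miss the real difficulty:
\begin{itemize}
\item Rescaling all pieces, $\mathbf{h}_i\mapsto\lambda\mathbf{h}_i$, leaves every active-piece comparison unchanged (unprojected iterates scale by $\lambda$, inner products by $\lambda^2$). So it cannot restore consistency. It also costs a factor $\lambda^2\approx\phi(t+1)^{-2}$ in $f(\bx_t)$, rather than the single $\phi(t+1)^{-1}$ in the statement. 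The $1/\phi(t+1)$ must be placed only on the $d_j$, in which the bound on $f(\bx_t)$ is linear.
\item Expanding $\|\bx_j\|^2=\|\sum_{i<j}\eta_i\mathbf{h}_i\|^2$ presupposes the very consistency you have not established. The cross terms are also not nonpositive by construction: for $i<k$, $\langle\mathbf{h}_i,\mathbf{h}_k\rangle=\sum_{m<i}d_m^2-c_id_i$. This is positive when $\eta_i$ is small, because $d_i$ must scale like $\min\{1,\eta_i\sqrt{t+1}\}$ for your final formula to come out.
\item There is no separate ``direction along which GD is optimizing'' to absorb the drift. The drift sits on the earlier coordinates, and controlling it is precisely the step-sum estimate above.
\end{itemize}
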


The proof of Lemma~\ref{lem: high dim bound} is substantially more involved than the previously discussed bounds. It is high-dimensional in nature, and is based on modifying a technique of \citet{harvey2019tight} which builds on a lower bound for max-of-linear functions due to \citet{nemirovskiyudin1983}, as detailed in the appendix.

We further note that $\min\{1,\eta_j\sqrt{j+1}\}\geq \eta_j\sqrt{j+1}/\phi(j+1)$, since by Lemma~\ref{lem: 1d bounds}.1 and Eq.~\eqref{eq: def phi} it holds that
$\phi(j+1)\geq \eta_j\sqrt{j+1}$ and $\phi(j+1)\geq 1$. Hence, Eq.~\eqref{eq: E >= 1/phi} implies that for any $t\in\NN:$

\[
\Ecal_\bmeta(t)
\geq 
\frac{1}{64\phi(t+1)\sqrt{t+1}}\sum_{j=0}^{t-1}\frac{(\eta_j\sqrt{j+1}/\phi(j+1))^2}{(t+1-j)}
\geq \frac{1}{64\phi(t+1)^3\sqrt{t+1}}\sum_{j=0}^{t-1}\frac{\eta_j^2(j+1)}{(t+1-j)}~.
\]
Therefore, since $\Ecal_\bmeta(t)\leq \frac{\phi(t)}{\sqrt{t}}\leq \frac{\phi(t+1)}{\sqrt{t}}$, we can rearrange and get
\begin{equation}\label{eq: phi(t)^4 lower bound}
    \phi(t+1)^4
    \geq \frac{1}{128}\sum_{j=0}^{t-1}\frac{\eta_j^2(j+1)}{(t+1-j)}
\geq
\frac{1}{128}\sum_{j=0}^{t-1}\frac{\eta_j^2 j}{(t+1-j)}
~.
\end{equation}

With a lower bound on $\phi$ in hand, it remains to lower bound the sum above, which as discussed in Remark~\ref{rem: common}, we can obtain even for a constant fraction of $t\in[T]$ as $T\to\infty$.

To that end, as the bound above holds for all $t$, we let $T\in\NN$ be some arbitrarily large time scale which we can average over, and get that
\begin{align}
128\phi(T+1)^4\geq
\frac{1}{T}\sum_{t=1}^{T}128\phi(t+1)^4
&\geq
\frac{1}{T}\sum_{t=1}^{T}\sum_{j=0}^{t-1}\frac{j\eta_j^2 }{(t+1-j)}
\nonumber\\&=\frac{1}{T}\sum_{k=1}^{T-1}k\eta_k^2
\sum_{t=1}^{T}\sum_{j=0}^{t-1}\frac{\one\{j=k\}}{(t+1-j)}
\nonumber\\&=\frac{1}{T}\sum_{k=1}^{T-1}k\eta_k^2
\sum_{t=k}^{T}\frac{1}{(t+1-k)}
\nonumber\\&=\frac{1}{T}\sum_{k=1}^{T-1}k\eta_k^2
H_{T+1-k}
~~~~~~~~~~~~~~~~~\left[{\text{~where~~}H_n:=\sum_{i=1}^{n}\frac{1}{i}}~\right]
\nonumber\\
&\geq\frac{H_{T/2}}{T}\sum_{k=1}^{T/2}k\eta_k^2~.
\label{eq: geq H phi}
\end{align}
Moreover, for
$t_1<T$ soon to be chosen
it holds that
\begin{align}
\sum_{k=1}^{T/2}k\eta_k^2
&\geq \sum_{k=t_1}^{T/2}k\eta_k^2
\geq t_1 \sum_{k=t_1}^{T/2}\eta_k^2
\nonumber\\&\overset{(1)}{\geq} \frac{t_1}{T/2-t_1}\left(\sum_{k=t_1}^{T/2}\eta_k\right)^2
\nonumber\\&\overset{(2)}{\geq}
\frac{t_1}{T/2-t_1}\left(\frac{\sqrt{T/2+1}}{4e^2 \phi(T/2+1)}-2\phi(t_1)\sqrt{t_1+1}\right)^2~,
\label{eq: eta sum lower phi}
\end{align}
where $(1)$ follows from a standard $\ell_1/\ell_2$ inequality (for every vector $\bx\in\reals^n:\,\|\bx\|_2^2\geq \|\bx\|_1^2/n$),
and $(2)$ is due to the fact that using Lemma~\ref{lem: 1d bounds}, it holds that
\begin{align*}
\sum_{k=t_1}^{T/2}\eta_k
&=\sum_{k=0}^{T/2}\eta_k-\sum_{k=0}^{t_1-1}\eta_k
\geq \frac{1}{4e^{2}\Ecal_\bmeta(T/2+1)}
-\sum_{k=0}^{t_1-1}\Ecal_\bmeta(k+1)
\geq \frac{\sqrt{T/2+1}}{4e^{2} \phi(T/2+1)}
-\phi(t_1)\sum_{k=1}^{t_1}\frac{1}{\sqrt{k}}
\nonumber\\&\geq \frac{\sqrt{T/2+1}}{4e^{2} \phi(T/2+1)}-2\phi(t_1)\sqrt{t_1+1}~. 
\end{align*}
By setting $t_1:=\lfloor\frac{T/2+1}{2^8 e^4\phi(T/2+1)^2}\rfloor-1$, it holds that
\[
\frac{\sqrt{T/2+1}}{4e^{2} \phi(T/2+1)}-2\phi(t_1)\sqrt{t_1+1}
\geq 
\frac{\sqrt{T/2+1}}{8e^{2} \phi(T/2+1)}~,
\]
which plugged back into Eq.~\eqref{eq: eta sum lower phi} gives 
\begin{align*}
\sum_{k=1}^{T/2}k\eta_k^2
&\geq 
\frac{\frac{T}{2^{9}e^4\phi(T/2+1)^2}}{\left(\half -\frac{1}{2^{9}e^{4}\phi(T/2+1)^2}\right)T}\cdot \frac{T/2}{8e^2 \phi(T/2+1)^2}
\\&\geq
\frac{T}{2^{13}e^{8}\phi(T+1)^4}~.
\end{align*}
Going back to Eq.~\eqref{eq: geq H phi}, we see that
\[
128\phi(T+1)^4
\geq
\frac{H_{T/2}}{T}\cdot \frac{T}{2^{13}e^{8}\phi(T+1)^4}~.
\]
By rearranging and recalling that the harmonic sum grows logarithmically, we overall get
\[
\phi(T+1) \geq
\frac{(H_{T/2})^{1/8}}{2^{5/2}e^{2}}
\geq \frac{1}{2^{5/2} e}\cdot \log^{1/8}(T/2)~,
\]
which completes the proof.

\section{Discussion} \label{sec: conclusion}

In this work, we proved that the last iterate of GD cannot converge at the information-theoretically optimal rate whenever the stopping time is not chosen in advance. As discussed, this proves a conjecture of \citet{jain2019making} regarding SGD.

Our work leaves open several follow-up questions.
In future work, we plan to extend our techniques to handle the strongly-convex case, where there exist similar gaps between the anytime $\log T/T$ last iterate rate \citep{shamir2013stochastic} as opposed to $1/T$ when $T$ is chosen in advance \citep{jain2019making}.

Another open direction is noting that while our result is qualitatively stronger than anticipated as it applies even to deterministic GD, it is quantitatively weaker than the known anytime upper bound (Example~\ref{ex: known upper}), leaving open a fine-grained gap between $\log^{1/8}(T)/\sqrt{T}$ and $\log (T)/\sqrt{T}$.

Finally, following the discussion in Remark~\ref{rem: common}, we recall that the 
doubling trick is the only procedure we are aware of that constructs a subsequence of iterates converging at the optimal rate.
It is interesting to note that this leads to exponentially increasing optimal stopping times, and therefore the set of optimal stopping times form a zero-density set.\footnote{A set $\mathcal{T}\subset \NN$ is called a zero-density set if $\lim_{n\to\infty}\frac{|\mathcal{T}\cap \{1,\dots,n\}|}{n}=0$.} Hence, it is natural to ask whether this is inevitable: Is it true that for any stepsize sequence, any subsequence of stopping times which converge  optimally necessarily has density zero?
Notably, this corresponds to strengthening our result from suboptimal stopping times having positive density to always having density $1$.

\paragraph{Acknowledgments.}
GK is supported by an Azrieli Foundation graduate fellowship.

\newpage

\bibliographystyle{plainnat}
\bibliography{bib}

\newpage

\appendix

\section{Additional Proofs} \label{app: additional proofs}

\subsection{Proof of Lemma~\ref{lem: 1d bounds}} \label{sec: alternative 1d proof}

For the first item, given $\bmeta$ and $t$, let $\epsilon>0$ be some arbitrarily small number so that $\epsilon<\min\{1,\eta_{t-1},\sum_{j=0}^{t-2}\eta_j\}$,
and denote $c_\epsilon=\frac{\epsilon}{\sum_{j=0}^{t-2}\eta_j}$. Consider the univariate function over $[-1,1]:$
\[
f(x)=\begin{cases}
-x,&\text{if }x< 0
\\
c_\epsilon x, &\text{if }0\leq x \leq \epsilon
\\
x-\epsilon+c_\epsilon\epsilon, &\text{if } x >\epsilon
\end{cases}
~.
\]
Note that $f$ is $1$-Lipschitz, convex since $c_\epsilon<1$, and minimized at $f(0)=0$. Consider the iterates of GD initialized at $x_0=\epsilon$. It is easy to see that for $i<t-1$ it holds that $x_{i+1}=x_i-\eta_i c_\epsilon$, and therefore $x_{t-1}=\epsilon-c_\epsilon\sum_{j=0}^{t-2}\eta_j=0$.
If at time $t-1$ the subgradient is given by $-1\in\partial f(x_{t-1})$, this leads to $x_{t}=\eta_{t-1}$.
Noting that $\eta_{t-1}>\epsilon$, we get that $\Ecal_\bmeta(t)\geq f(x_{t})=\eta_{t-1}-\epsilon+c_\epsilon\epsilon$. Since this inequality holds for arbitrarily small $\epsilon$, it must hold that
\[
\Ecal_\bmeta(t)\geq \lim_{\epsilon\to0^+}(\eta_{t-1}-\epsilon+c_\epsilon \epsilon)=\eta_{t-1}
~.
\]

We now turn to prove the second claim, which is based on the proof of \citep[Theorem 1]{kornowski2024open}.
Given $\bmeta$ and $t$, consider the convex quadratic
\[
f(x)=\frac{x^2}{4\sum_{j=0}^{t-1}\eta_j}~.
\]
Note that $f$ is $1$-Lipschitz over $[-1,1]$ as long as $\sum_{j=0}^{t-1}\eta_{j}\geq \half$, and is minimized at $f(0)=0$.
Examining the iterates of GD initialized at $x_0=1$, a simple induction reveals that
$x_t
=\prod_{j=0}^{t-1}(1-\frac{\eta_j}{2\sum_{j=0}^{t-1}\eta_j})$.
Therefore
\begin{align*}
\Ecal_\bmeta(t)
\geq f(x_t)
&=\frac{1}{4\sum_{j=0}^{t-1}\eta_j}\cdot\prod_{j=0}^{t-1}\left(1-\frac{\eta_j}{2\sum_{j=0}^{t-1}\eta_j}\right)^2
\\&=\frac{1}{4\sum_{j=0}^{t-1}\eta_j}\cdot\exp\left[2\cdot\sum_{j=0}^{t-1}\log\left(1-\frac{\eta_j}{2\sum_{j=0}^{t-1}\eta_j}\right)\right]
\\&\geq \frac{1}{4\sum_{j=0}^{t-1}\eta_j}\cdot\exp\left[-4\cdot\sum_{j=0}^{t-1}\frac{\eta_j}{2\sum_{j=0}^{t-1}\eta_j}\right]
\\&=\frac{e^{-2}}{4\sum_{j=0}^{t-1}\eta_j}~.
\end{align*}

\subsection{Proof of Lemma~\ref{lem: high dim bound}}

To prove Lemma~\ref{lem: high dim bound}, we start by proving the following auxiliary result.

\begin{lemma} \label{lem: aj bj}
    Let $T\in\NN$, and suppose $(a_j)_{j=0}^{T}, (b_j)_{j=0}^{T}\geq0$ are non-negative sequences satisfying the following conditions:
\begin{enumerate}[(i)]
    \item $\sum_{j=0}^{T}a_j^2\leq \half$.
    \item For all $j$ it holds that $b_j\leq \min\{\half, \frac{1}{2\eta_{j}\sqrt{T+1}}\}$.
    \item For all $j$ it holds that $a_j \sum_{k=j+1}^{T}\eta_k\leq \half\eta_j b_j$.
\end{enumerate}
Then $\Ecal_\bmeta(T)\geq \half\sum_{j=0}^{T-1}a_j b_j \eta_j$.
\end{lemma}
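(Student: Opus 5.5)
We would prove the statement with a Nemirovski--Yudin / \citet{harvey2019tight} style max-of-linear construction in dimension $T+1$ (or larger), on the unit ball $\Xcal=\{\bx:\|\bx\|\le 1\}$, which has diameter $2$ and is compatible with $G=1,D=2$. Let $\be_0,\dots,\be_T$ be orthonormal. Define
\[
h_j(\bx)=\langle \bx,\be_j\rangle - \textstyle\sum_{k<j} a_k\langle \bx,\be_k\rangle,
\]
or more simply let $f(\bx)=\max_{0\le j\le T} \langle \bx, \bv_j\rangle$ for carefully designed vectors $\bv_j$. Each $\bv_j$ combines a ``coordinate $b_j$'' direction with small $a$-weights on earlier coordinates, so that $\|\bv_j\|\le 1$ by (i)--(ii) and $f$ is $1$-Lipschitz.

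The key design is that GD from $\bx_0=\bzero$ produces, at step $j$, the subgradient $\bv_j$ with a fresh coordinate $\be_j$. The iterate then becomes $\bx_{j+1}=\bx_j-\eta_j\bv_j$, which gives coordinate $j$ the value $-\eta_j b_j$. Later gradients keep pushing coordinate $j$ in the same direction, by $a_j\eta_k$ at each step $k>j$. Concretely, we take $\bv_j = -a_0\be_0-\dots$ with the sign arranged so that each new coordinate's contribution is reinforced.

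We then verify by induction that $\bv_j$ is the maximizer at $\bx_j$, with ties broken in our favor; this is allowed since we may choose any subgradient. This is where condition (iii) enters. Along the trajectory, coordinate $j$ reaches magnitude $\eta_jb_j + a_j\sum_{k>j}\eta_k \le \tfrac32\eta_j b_j$, so the accumulated drift never overturns the ordering of the linear pieces. Condition (ii) ensures $\eta_j b_j\le 1/(2\sqrt{T+1})$, so $\|\bx_t\|^2\le \sum_j (\tfrac32\eta_jb_j)^2\le \tfrac{9}{16}<1$ and no projection ever activates.

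At time $T$ we evaluate the objective. We get $f(\bx_T)\ge\langle \bx_T,\bv_T\rangle$, and $\bv_T$ carries weights $a_j$ on each earlier coordinate $j$. This yields $f(\bx_T)\ge \sum_{j<T} a_j\cdot \eta_j b_j$, up to the factor lost in the construction. On the other hand, $\min f\le f(\bzero)=0$; if needed we instead show $\min f\le 0$ by taking a small multiple of $-\sum_j \bv_j$. Hence $\Ecal_\bmeta(T)\ge f(\bx_T)-\min f\ge \tfrac12\sum_j a_jb_j\eta_j$.

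The main obstacle is the precise choice of the $\bv_j$: getting the induction that $\bv_j$ attains the max at $\bx_j$ to go through exactly with the slack provided by (iii), while keeping the final inner product at least $\tfrac12\sum a_jb_j\eta_j$. We would first try $\bv_j=b_j\be_j+\sum_{k<j}a_k\be_k$ with the steps pushing coordinates negative, following the template of Harvey et al.\ adapted to general stepsizes. If the signs clash, we would add small distinct offsets to break ties and check the comparisons $\langle\bx_j,\bv_j-\bv_i\rangle\ge0$ for $i\ne j$ coordinatewise.
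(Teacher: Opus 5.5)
There is a genuine gap, and it sits exactly where you put the ``main obstacle'': the sign pattern of the $\bv_j$. The dynamics you describe cannot give the bound. In them, later gradients push coordinate $j$ further in the direction of its initial displacement, so it grows to $\eta_jb_j+a_j\sum_{k>j}\eta_k$.

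To see why, suppose every later vector $\bv_k$, $k>j$ (including $\bv_T$), carries entry $s_ja_j$ on coordinate $j$, with $s_j\in\{\pm1\}$. The drift then adds $-s_ja_j\sum_{k=j+1}^{T-1}\eta_k$ to that coordinate. Its contribution to $\langle\bx_T,\bv_T\rangle$ is therefore $-a_j^2\sum_{k=j+1}^{T-1}\eta_k\le 0$, whatever the signs. The only positive contribution can come from the initial push $-\eta_j(\bv_j)_j$. It is positive only if the fresh-coordinate entry of $\bv_j$ has the \emph{opposite} sign to the later $a_j$-entries, and in that case the drift erodes the coordinate rather than reinforcing it.

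Your concrete first try $\bv_j=b_j\be_j+\sum_{k<j}a_k\be_k$ has matching signs. It gives $\langle\bx_T,\bv_T\rangle=-\sum_{j<T}a_j\big(\eta_jb_j+a_j\sum_{k=j+1}^{T-1}\eta_k\big)<0$, and indeed every $\langle\bx_T,\bv_i\rangle\le 0$. So $f(\bx_T)-\min f\ge\langle\bx_T,\bv_T\rangle-0$ yields nothing. The inductive maximizer claim fails too:
\[
\langle\bx_t,\bv_t-\bv_i\rangle=(b_i-a_i)|x_{t,i}|-\sum_{k=i+1}^{t-1}a_k|x_{t,k}|\le\tfrac32 b_i^2\eta_i-\sum_{k=i+1}^{t-1}a_kb_k\eta_k .
\]
This is negative as soon as the sum you are trying to certify is large compared with a single term, which is exactly the regime where the lemma is used. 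Tie-breaking offsets do not help, since these are strict inequalities.

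The paper uses the opposite sign pattern; your initial $h_j$ actually had it, before you switched. In your $0$-indexed coordinates, set $\bv_i=\sum_{j<i}a_j\be_j-b_i\be_i$ and $f=\max_i\langle\cdot,\bv_i\rangle$ on the unit ball.

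\begin{itemize}
\item Coordinate $j$ is set to $+\eta_jb_j$ at step $j$ and then \emph{decreased} by $a_j\eta_k$ at each later step $k$. Hence $x_{t,j}=\eta_jb_j-a_j\sum_{k=j+1}^{t-1}\eta_k\in[\half\eta_jb_j,\eta_jb_j]$ by (iii). This, not preserving an ordering against reinforcement, is the role of (iii).
\item Nonnegativity of the coordinates makes the maximizer property immediate. Since $\bx_t$ vanishes on coordinates $\ge t$, all $\langle\bx_t,\bv_i\rangle$ with $i\ge t$ equal $\sum_{j<t}a_jx_{t,j}$. For $i<t$ the value $\sum_{j<i}a_jx_{t,j}-b_ix_{t,i}$ is no larger. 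So the minimal-index maximizer $\bv_t$ may be returned.
\item No projection occurs, because $\|\bx_t\|^2\le\sum_j(\eta_jb_j)^2\le\frac14$ by (ii).
\item Finally, $\min f\le f(\bzero)=0$ and $f(\bx_T)\ge\langle\bx_T,\bv_T\rangle=\sum_{j<T}a_jx_{T,j}\ge\half\sum_{j<T}a_jb_j\eta_j$. The factor $\half$ is precisely the erosion permitted by (iii).
\end{itemize}
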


\begin{proof}[Proof of Lemma~\ref{lem: aj bj}]
For $i\in\{0,\dots,T\}$ we define the vectors $\bv_i:=\sum_{j=1}^{i}a_{j-1}\be_{j}-b_i\be_{i+1}\in\reals^{T+1}$, and let
$f:\reals^{T+1}\to\reals$ be the function
$f(\bx):=\max_{i}\{\bv_i^\top \bx\}$.
Note that each component $\bx\mapsto \bv_i^\top \bx$ is $1$-Lipschitz since $\|\bv_i\|^2\leq \sum_{j=0}^{T-1}a_j^2+\max_{j}b_j^2\leq 1$, and therefore
$f$ is $1$-Lipschitz as the maximum of $1$-Lipschitz functions.
We consider GD applied to $f$,
initialized at the origin and projected onto the unit ball $\BB\subset\reals^{T+1}$, where the given subgradient at a point $\bx\in\reals^{T+1}$ corresponds to $\bv_i$ for the minimal index $i$ such that $f(\bx)=\bv_i^\top\bx$.

We will first prove by induction over $t$ that for all $t\in[T]:$
\begin{equation} \label{eq: x_t,j}
\bx_t=\sum_{j=1}^{t}\Big(b_{j-1}\eta_{j-1}-a_{j-1}\textstyle\sum_{k=j}^{t-1}\eta_k\Big)\be_j
~.
\end{equation}
For the base case $t=1$, note that all the $\bv_i$'s are in the subgradient set of $\bx_0=\bzero$, so the subgradient choice of the minimal index leads to
\[
\bx_1=\Proj_\BB(\bzero-\eta_0\bv_0)
=\Proj_\BB(\eta_0 b_0\be_1)
\overset{(\eta_0 b_0\leq\frac{1}{2\sqrt{T+1}}<1)}{=}\eta_0 b_0\be_1~,
\]
and
therefore \eqref{eq: x_t,j} holds for $t=1$.
Now assume \eqref{eq: x_t,j} holds at time $t$.
To obtain the claim for $t+1$, we start by showing that the returned subgradient at $\bx_t$ is $\bv_t$. To see that, we see that for any $i\in\{0,\dots,T-1\}:$
\begin{align*}
\bv_i^\top \bx_t
&=
\left(\sum_{j=1}^{i}a_{j-1}\be_{j}-b_i\be_{i+1}\right)^\top \left(\sum_{j=1}^{t}\Big(b_{j-1}\eta_{j-1}-a_{j-1}\textstyle\sum_{k=j}^{t-1}\eta_k\Big)\be_j\right)
\\&=\sum_{j=1}^{\min\{i,t\}}a_{j-1}
\underset{(\star)}{\underbrace{\Big(b_{j-1}\eta_{j-1}-a_{j-1}\textstyle\sum_{k=j}^{t-1}\eta_k\Big)}}
-\one_{\{i+1\leq t\}}\cdot b_i
\underset{(\star\star)}{\underbrace{\Big(b_{i}\eta_{i}-a_{i}\textstyle\sum_{k=i+1}^{t-1}\eta_k\Big)}}~,
\end{align*}
and note that $(\star),(\star\star)>0$ by assumption \textit{(iii)}, and therefore the minimal index that maximizes the expression above is clearly $i= t$. Hence, the given subgradient at $\bx_t$ is 
$\bv_t$ as claimed, which gives that
\begin{align}
\bx_{t+1}&=\Proj_\BB(\bx_t-\eta_t\bv_t) \nonumber
\\
&=\Proj_\BB\left(
\sum_{j=1}^{t}\Big(b_{j-1}\eta_{j-1}-a_{j-1} \sum_{k=j}^{t-1}\eta_k\Big)\be_j
-\eta_t\sum_{j=1}^{t}a_{j-1}\be_{j}+\eta_t b_t\be_{t+1}
\right)
\nonumber \\
&=\Proj_\BB\left(\sum_{j=1}^{t+1}\Big(b_{j-1}\eta_{j-1}-a_{j-1}\textstyle\sum_{k=j}^{t}\eta_k\Big)\be_j
\right)
\nonumber \\
&=\sum_{j=1}^{t+1}\underset{(\diamond_j)}{\underbrace{\Big(b_{j-1}\eta_{j-1}-a_{j-1}\textstyle\sum_{k=j}^{t}\eta_k\Big)}}\be_j
\label{eq: t+1 induc}
\end{align}
where the last equality follows from noting that for each $j$, by assumption \textit{(iii)} it holds that
\[
b_{j-1}\eta_{j-1}
\geq (\diamond_j)
\geq b_{j-1}\eta_{j-1}-a_{j-1}\textstyle\sum_{k=j}^{T}\eta_k 
\geq \half b_{j-1}\eta_{j-1}
\geq 0~,
\]
hence $\sum_{j=1}^{t+1}(\diamond_j)^2\leq \sum_{j=1}^{T+1}(b_{j-1}\eta_{j-1})^2
\leq \sum_{j=1}^{T+1}\frac{1}{4(T+1)}<1$ using assumption \textit{(ii)}.
We see that \eqref{eq: t+1 induc}
completes the induction step, proving \eqref{eq: x_t,j}.

To complete the proof, we note that $\min_{\bx\in\BB}f(\bx)\leq f(\bzero)=0$, and 
recall we saw that $f(\bx_T)=\bv_T^\top\bx_T$,
therefore
\begin{align*}
\Ecal_\bmeta(T)&\geq
f(\bx_{T})-\min_{\bx\in\BB}f(\bx)
\geq f(\bx_{T})
=
\bv_{T}^\top
\bx_T
\\
&=\left(\sum_{j=1}^{T}a_{j-1}\be_{j}-b_T\be_{T+1}\right)^\top
\left(\sum_{j=1}^{T}\Big(b_{j-1}\eta_{j-1}-a_{j-1}\textstyle\sum_{k=j}^{T-1}\eta_k\Big)\be_j\right)
\\&=\sum_{j=1}^{T}a_{j-1}
\Big(b_{j-1}\eta_{j-1}-a_{j-1}\textstyle\sum_{k=j}^{T-1}\eta_k\Big)
\\&\geq_{\textit{(iii)}}\half\sum_{j=1}^{T}a_{j-1}b_{j-1}\eta_{j-1}
=\half\sum_{j=0}^{T-1}a_{j}b_{j}\eta_{j}~.
\end{align*}

\end{proof}

We will also need the following bound on step sums.

\begin{lemma}\label{lem: eta sum upper}
For any $t_1<t_2\in\NN:~\sum_{j=t_1}^{t_2}\eta_j\leq 2\phi(t_2+1)(\sqrt{t_2}-\sqrt{t_1})$
\end{lemma}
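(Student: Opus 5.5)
The plan is to bound each stepsize individually using Lemma~\ref{lem: 1d bounds}.1 together with the assumed anytime guarantee in Eq.~\eqref{eq: def phi}, and then sum. For every $j\in\{t_1,\dots,t_2\}$, Lemma~\ref{lem: 1d bounds}.1 applied at time $j+1$ gives
\[
\eta_j\leq \Ecal_\bmeta(j+1)\leq \frac{\phi(j+1)}{\sqrt{j+1}}\leq \frac{\phi(t_2+1)}{\sqrt{j+1}}~.
\]
The last step uses that $\phi$ is non-decreasing and $j+1\leq t_2+1$. Summing reduces the claim to a purely numerical estimate:
\[
\sum_{j=t_1}^{t_2}\eta_j\leq \phi(t_2+1)\sum_{j=t_1}^{t_2}\frac{1}{\sqrt{j+1}}=\phi(t_2+1)\sum_{k=t_1+1}^{t_2+1}\frac{1}{\sqrt{k}}~.
\]

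The second step compares this sum with an integral. Since $x\mapsto x^{-1/2}$ is decreasing, $\frac{1}{\sqrt{k}}\leq\int_{k-1}^{k}x^{-1/2}\,dx$ for each $k$, so
\[
\sum_{k=t_1+1}^{t_2+1}\frac{1}{\sqrt{k}}\leq \int_{t_1}^{t_2+1}\frac{dx}{\sqrt{x}}=2\big(\sqrt{t_2+1}-\sqrt{t_1}\big)~.
\]
The integral runs over $[t_1,t_2+1]$ and not $[t_1,t_2]$, because the sum has $t_2-t_1+1$ terms while $\int_{t_1}^{t_2}$ covers only $t_2-t_1$ unit intervals.

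The main obstacle is therefore the extra unit interval: converting $\sqrt{t_2+1}$ into $\sqrt{t_2}$ exactly as the lemma is worded. I do not think this can be done by sharpening the integral comparison. Already for $t_1=1$, $t_2=2$ the numerical sum equals $1/\sqrt2+1/\sqrt3\approx1.28$, whereas $2(\sqrt2-1)\approx0.83$. So the per-step bound $\eta_j\leq\phi(t_2+1)/\sqrt{j+1}$ alone cannot yield the stated constant. First I would try to obtain a slightly stronger per-step bound. Combining Lemma~\ref{lem: 1d bounds}.1 with the other information on $\Ecal_\bmeta$ does not obviously help. Instead I would look at how the lemma is used: it only controls step sums over long ranges inside the proof of Lemma~\ref{lem: high dim bound}. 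There one may either apply it with upper index $t_2-1$, using $\sum_{j=t_1}^{t_2-1}\eta_j\leq 2\phi(t_2)(\sqrt{t_2}-\sqrt{t_1})$, which follows verbatim from the argument above, or absorb the single boundary term $\eta_{t_2}\leq\phi(t_2+1)/\sqrt{t_2+1}$ separately.

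If the statement must be proved literally with $\sqrt{t_2}$ for all $t_1<t_2$, I expect it to fail in general. For instance, if $\phi$ is essentially constant near $t_1=1$ and the stepsizes nearly saturate $\eta_j=\phi(j+1)/\sqrt{j+1}$, the left side exceeds $2\phi(t_2+1)(\sqrt{t_2}-\sqrt{t_1})$. In that case the index shift described above is the correction I would make before feeding the bound into the proof of Lemma~\ref{lem: high dim bound}.
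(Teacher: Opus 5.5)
Your argument is the paper's argument. You bound each step by $\eta_j\le\Ecal_\bmeta(j+1)\le\phi(t_2+1)/\sqrt{j+1}$ using Lemma~\ref{lem: 1d bounds}.1, Eq.~\eqref{eq: def phi} and monotonicity of $\phi$, and then compare $\sum_{k=t_1+1}^{t_2+1}k^{-1/2}$ with an integral. The only divergence is that last comparison, and there you are right and the paper is not. Its proof asserts $\sum_{j=t_1+1}^{t_2+1}\phi(j)/\sqrt{j}\le\phi(t_2+1)\int_{t_1}^{t_2}x^{-1/2}\,dx$, which matches $t_2-t_1+1$ terms against only $t_2-t_1$ unit intervals, and your computation at $t_1=1$, $t_2=2$ refutes it. What this line of reasoning actually proves is your bound $\sum_{j=t_1}^{t_2}\eta_j\le 2\phi(t_2+1)(\sqrt{t_2+1}-\sqrt{t_1})$.

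Your suspicion that the statement is false as worded is also correct. However, the saturation scenario you sketch is only a heuristic: you would need an actual stepsize sequence whose worst-case error equals $\eta_j$, with a nearly constant admissible $\phi$. A rigorous certificate is much simpler. Every iterate lies in a domain of diameter $2$ and $f$ is $1$-Lipschitz, so $\Ecal_\bmeta\equiv 2$ is an anytime guarantee for \emph{every} $\bmeta$. Therefore $\phi(t)=2\sqrt{t}$ is admissible in Eq.~\eqref{eq: def phi}. Take $\eta_j\equiv 3/2$ and $t_2=t_1+1$. The left side is $3$, while the stated right side is $4\sqrt{t_1+2}\,(\sqrt{t_1+1}-\sqrt{t_1})\le 4\sqrt{3}(\sqrt{2}-1)<2.9$ for every $t_1\ge 1$. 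Your corrected right side, $4\sqrt{t_1+2}\,(\sqrt{t_1+2}-\sqrt{t_1})>4$, does hold, so the example isolates exactly the off-by-one. (The stepsizes are kept below $2$ on purpose. For $\eta_{t-1}>2$ the same trivial guarantee contradicts Lemma~\ref{lem: 1d bounds}.1 itself, whose proof tacitly needs $\eta_{t-1}\le 1$ so that $x_t=\eta_{t-1}$ stays in $[-1,1]$. Both your bound and the paper's inherit that tacit restriction.)

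Finally, your repair costs nothing downstream. The lemma is used only to verify condition (iii) in the proof of Lemma~\ref{lem: high dim bound}, where the paper immediately weakens $\sqrt{t}\sqrt{t+1}$ to $t+1$ anyway. Applying the corrected bound with $t_1=j+1$, $t_2=t$ gives $\sum_{k=j+1}^{t}\eta_k\le 2\phi(t+1)(\sqrt{t+1}-\sqrt{j+1})$. This lands directly on $\frac{t+1-\sqrt{j+1}\sqrt{t+1}}{t+1-j}\le 1$, so the rest of that proof goes through unchanged. The corrected per-step argument also covers the boundary case $j=t-1$, where $t_1=t_2$.
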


\begin{proof}[Proof of Lemma~\ref{lem: eta sum upper}]
By Lemma~\ref{lem: 1d bounds}.1, for any $t_1<t_2\in\NN:$
\begin{equation*}
\sum_{j=t_1}^{t_2}\eta_j
\leq \sum_{j=t_1}^{t_2}\Ecal_\bmeta(j+1)
\leq \sum_{j=t_1+1}^{t_2+1}\frac{\phi(j)}{\sqrt{j}}
\leq \phi(t_2+1)\int_{t_1}^{t_2}\frac{1}{\sqrt{x}}dx
=2\phi(t_2+1)(\sqrt{t_2}-\sqrt{t_1})
~.
\end{equation*}

\end{proof}

We turn back prove Lemma~\ref{lem: high dim bound}. Given $t\in\NN$ and any $j<t$, let
\[
a_j:=\frac{\min\{1,\eta_j\sqrt{t+1}\}}{16\phi(t+1)(t+1-j)}~,
~~~b_j:=\min\{\half,\frac{1}{2\eta_j\sqrt{t+1}}\}~.
\]
We note that the conditions in Lemma~\ref{lem: aj bj} hold:
\begin{enumerate}[\itshape(i)]
    \item It holds that
    \[
    \sum_{j=0}^{t}a_j^2\leq \frac{1}{256\phi(t+1)}\sum_{j=0}^{t}\frac{1}{(t+1-j)^2}=\frac{1}{256\phi(t+1)}\sum_{j=1}^{t}\frac{1}{j^2}< \frac{1}{256}\cdot \frac{\pi^2}{6}<\half~.
    \]
    \item For all $j:$
    \[
    b_j\leq\min\{\half, \frac{1}{2\eta_{j}\sqrt{t+1}}\}~.
    \]
    \item For all $j:$
    \begin{align*}
    a_j \sum_{k=j+1}^{t}\eta_k
    &\overset{\text{Lemma\,}\ref{lem: eta sum upper}}{\leq} \frac{\min\{1,\eta_j\sqrt{t+1}\}}{16\phi(t+1)(t+1-j)}
    \cdot 2\phi(t+1)(\sqrt{t}-\sqrt{j+1})
    \\&~~=\frac{(\sqrt{t}-\sqrt{j+1})\sqrt{t+1}}{t+1-j}\min\{\frac{1}{8\sqrt{t+1}},\frac{\eta_j}{8}\}
    \\&~~\leq \underset{\leq 1}{\underbrace{\frac{t+1-\sqrt{j+1}\sqrt{t+1}}{t+1-j}}}\cdot\min\{\frac{1}{8\sqrt{t+1}},\frac{\eta_j}{8}\}
    \\
    &~~\leq\min\{\frac{1}{8\sqrt{t+1}},\frac{\eta_j}{8}\}<\half\eta_j b_j~.
    \end{align*}
\end{enumerate}
Therefore, since the conditions of Lemma~\ref{lem: aj bj} hold, we can apply and get that
\begin{align*}
\Ecal_\bmeta(t)
&\geq \half\sum_{j=0}^{t-1}a_j b_j \eta_j
\\
&=\sum_{j=0}^{t-1}\frac{\min\{1,\eta_j\sqrt{t+1}\}}{16\phi(t+1)(t+1-j)}\cdot \min\{\frac{1}{4\sqrt{t+1}},\frac{\eta_j}{4}\}
\\
&=\frac{1}{64\phi(t+1)\sqrt{t+1}}\sum_{j=0}^{t-1}\frac{\min\{1,\eta_j\sqrt{t+1}\}^2}{(t+1-j)}~.
\end{align*}

\end{document}